\theoremstyle{plain}
\newtheorem{mainthm}{Theorem}
\newtheorem{maincor}[mainthm]{Corollary}
\newtheorem{thm}{Theorem}[section]
\newtheorem{cor}[thm]{Corollary}
\newtheorem{defi}[thm]{Definition}
\theoremstyle{definition}
\newtheorem{rem}[thm]{Remark}
\newtheorem{question}{Question}
\newcommand{\eqdef}{\stackrel{\scriptscriptstyle\rm def}{=}}
\DeclareMathOperator{\IFS}{IFS}
\let\oldtocsection=\tocsection
\let\oldtocsubsection=\tocsubsection
\renewcommand{\tocsection}[2]{\hspace{0em}\bf\oldtocsection{#1}{#2}}
\renewcommand{\tocsubsection}[2]{\hspace{1em}\oldtocsubsection{#1}{#2}}
\begin{document}
\title[Ergodicity of iterated function systems]{Ergodicity of iterated function systems via minimality\\ on the hyper spaces}
\author[A. Sarizadeh]{Aliasghar Sarizadeh}
\address{\centerline{Department of Mathematics, Ilam University,}
   \centerline{Ilam, Iran.}}
\email{ali.sarizadeh@gmail.com}
\keywords{iterated function systems, induced iterated function systems, ergodicity, minimality, hyper-space, induced map}
\begin{abstract}
We give a sufficient condition for the ergodicity of  the Lebesgue measure for an iterated function system of diffeomorphisms.
This is done via the induced iterated function system on the space of continuum (which is called hyper-space).
We introduce a notion of minimality for induced IFSs which implies that the Lebesgue measure is ergodic for the original IFS.
Here, to beginning, the required regularity is $C^1$.
However, it is proven that the $C^1$-regularity is a redundant condition to prove ergodicity
with respect to the class of quasi-invariant measures.

As a consequence of mentioned results, we obtain ergodicity with respect to
Lebesgue measure for several systems.
\end{abstract}
\maketitle
\thispagestyle{empty}
\section{Introduction}
The word ``\emph{Ergodic}" comes from classical statistical mechanics.
In the context of dynamical systems, ergodic theory is the statistical study
of systems relative to at least quasi-measures.
Actually, ergodicity is concerned with the behavior of null and conull invariant sets.

Nowadays, the existence of a relationship between  minimality  and ergodicity can not be denied.
This relation between minimality and ergodicity yields to the following question:
Under what conditions a minimal iterated function system on
a compact differentiable manifold is volume-ergodic?
For instance, by $C^{1+\alpha}$-regularity and expanding assumptions, Navas  proved  that minimality implies Lebesgue ergodicity for a group action on the circle (Ref.  \cite{Na04}).
But the generalization result of Navas to higher-dimensional manifolds  seems to have a much more complicated
structure. In Ref. \cite{bfms}, by additional assumption of conformality  of generators,
authors obtained Lebesgue ergodicity for a group action
on higher-dimensional manifolds. We refer to Ref. \cite{S15}, for a robust example of conformal minimal systems, in dimension two.

On the other hands, in Ref. \cite{OR01}, it is known that the $C^1$-regularity is not
a sufficient condition to conclude ergodicity from minimality. More interesting cases are taken into consideration concern to the
context of $C^{1+\alpha}$-diffeomorphisms ($C^1$-diffeomorphisms with $\alpha$-H\"{o}lder derivatives
with $\alpha> 0$).

Summing up what previously mentioned: it seems that $C^{1+\alpha}$-regularity and some kinds of hyperbolicity
(e.g. uniform, non-uniform or partial) play essential role to prove ergodicity.

The aim of present work is to establish the ergodicity for some
$C^1$-regular systems which can be far from hyperbolicity, in some sense.
On the other hand, it seems that ergodicity
was never studied via the induced iterated function systems on hyper-spaces.
It implies that we study the problem  from a different point of view.
\subsection{Minimality and Ergodicity of iterated function systems}
An iterated function system generated by finitely many maps is the collection of all possible
compositions of the maps which is a popular way
to generate and explore a variety of fractals.
More precisely,
consider finitely many maps
$\mathcal{F}=\{f_{1},\dots,f_{k}\}$ on a compact metric space $(X,d)$.
Write $\langle\mathcal{F}\rangle^+$ for the semigroup generated by collection $\mathcal{F}$ under function composition.
The action of the semigroup $\langle\mathcal{F}\rangle^+$ on $X$ is called the iterated function system
associated to $\mathcal{F}$ and we denote it by $\IFS(X;\mathcal{F})$ or $\IFS(\mathcal{F})$.

We said that the $\IFS(X;\mathcal{F})$ is minimal if every invariant non-empty closed subset of $X$
is the whole space $X$, where $A\subset X$ is invariant for $\IFS(X;\mathcal{F})$ if
$$
           f(A)\subseteq A;\ \ \ \ \forall f\in \mathcal{F}.
$$
In additional, let $X$ equipped to a quasi-invariant probability measure $\mu$.
The $\IFS(X;\mathcal{F})$ is
\emph{ergodic} with respect to $\mu$  if $\mu(A)\in \{0,1\}$ for all
invariant set $A$ of $\IFS(X;\mathcal{F})$.

Observe that the counterpart to ergodicity is minimality, in topological point of view.
So, it is logical that the relation between Ergodicity and minimality have been studied extensively by many authors;
(see for instance see Refs. \cite{bfms}, \cite{dkn}, \cite{f61}, \cite{OR01} and \cite{S15}).

As previously mentioned, we will focus on the Lebesgue measure which is
quasi-invariant for $C^1$ (local) diffeomorphisms.

\subsection{Induced maps on hyper-spaces}
Let $f : X\to X$ be a continuous map from a continuum (that is, a compact connected
metric space) $X$ into itself. Consider the hyper-space
$$
\mathcal{K}(X)=\{A\subseteq X:\ A~\mathrm{is~non-empty~compact~connected~subset~of~} X\}
$$
with the Hausdorff metric $d_H$  on it which denoted by $(\mathcal{K}(X), d_H)$.
It is known that if $X$ is compact,
so is the hyperspace $\mathcal{K}(X)$ (see Ref. \cite{n}). A induced map is defined as follow
$$
  \hat{f}: \mathcal{K}(X) \to \mathcal{K}(X), \quad  \hat{f}(A)\eqdef f(A)=\bigcup_{a\in A}\{f(a)\}.
$$
Consider a classical dynamical system $(J,f)$
where $f$ is continuous and $J$ is  one-dimensional continuum.
Although, the systems $(J,f)$ and  $(\mathcal{K}(J),\hat{f})$ have a similar behavior in
some phenomena (see Refs. \cite{f}, \cite{frs} and \cite{m}),  but there exist some other phenomena in
which  the behavior of these systems are  completely different. More precisely,
the induced map is never transitive, even if $f$ happens to be Refs. \cite{aim} and \cite{ko}.
Clearly, the induced map is never minimal, too.

The main goal of this note is study of iterated function systems generated by finite induced maps to getting ergodicity of original system.
\subsection{Induced IFSs and ergodicity of orginal IFSs}
Now, consider $\IFS(X;\mathcal{F})$ with
$\mathcal{F}=\{f_{1},\dots,f_{k}\}$.
Naturally, we can define the induced iterated function system
$\IFS(\mathcal{K}(X);\hat{\mathcal{F}})$ on
the space $(\mathcal{K}(X), d_H)$ in which $\hat{\mathcal{F}}=\{\hat{f_1},\dots,\hat{f_k}\}$ and
$\hat{f}_i$ is induced map, for  $1\leq i\leq k$.

As previously mentioned, induced map is never minimal or even transitive.
So, logically,  the following question arises.
\begin{question}
Is there $\IFS(X;\mathcal{F})$ so that $\IFS(\mathcal{K}(X);\hat{\mathcal{F}})$ is minimal or transitive?
\end{question}
In this note, independent of the answer of above question, we define a weak kind of minimality for
induced iterated function systems on hyper-space.
\begin{defi}\label{def1}
For $\Theta>5$, we say that  $\IFS(X;\mathcal{F})$ is a
$\Theta$-hyper-minimal if for some $r_0>0$ and every $x,y\in X$ the following hold:
for every every $0<r<r_0$ there exists $\hat{h}=\hat{h}(r,x,y)\in \IFS(\mathcal{K}(X);\hat{\mathcal{F}})$ so that
$$
          d_H(\hat{h}(B(x,r)^\prime),B(y,r)^\prime)<r/\Theta,
$$
where $B(x,r)$ is the geodesic ball of radius $r$ centered about $x$ and
$B(x,r)^\prime$ is the set of all limit points of the set $B(x,r)$.
\end{defi}
In many works to prove ergodicity, bounded distortion is a main ingredient in the proof.
An important things about $\Theta$-hyper-minimal is that bounded
distortion is inherent in the definition of $\Theta$-hyper-minimal.
Then there is no reason  to restrict our generators to especial maps
(contracting or expanding maps), when we do not have to apply
the classical bounded distortion (compare with Refs. \cite{bfms} and \cite{S15}).

Now, we are ready to formulate the main result of this paper.
\begin{mainthm}\label{mainA}
Suppose that $M$ is a smooth compact differential
manifold and the $\IFS(M;\mathcal{F})$ is $\Theta$-hyper-minimal,  for
$\mathcal{F}=\{f_1,\dots,f_k\}\subset \mathrm{Diff}^{1}(M)$. Then the system $\IFS(M;\mathcal{F})$ is ergodic
with respect to Lebesgue measure.
\end{mainthm}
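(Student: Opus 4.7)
The plan is to argue by contradiction via a Lebesgue density point argument, transporting mass through the action of $\langle\mathcal{F}\rangle^{+}$. Suppose $A\subseteq M$ is invariant under $\IFS(M;\mathcal{F})$ with $0<\mathrm{Leb}(A)<\mathrm{Leb}(M)$. The Lebesgue density theorem supplies a density point $x_{0}$ of $A$ and a density point $y_{0}$ of $A^{c}$, and for $\varepsilon>0$ one can then select $r<r_{0}$ so small that
\[
\mathrm{Leb}(A\cap B(x_{0},r))>(1-\varepsilon)\,\mathrm{Leb}(B(x_{0},r))
\quad\text{and}\quad
\mathrm{Leb}(A\cap B(y_{0},(1+1/\Theta)r))<\varepsilon\,\mathrm{Leb}(B(y_{0},(1+1/\Theta)r)).
\]
By $\Theta$-hyper-minimality applied to the pair $(x_{0},y_{0})$ at scale $r$, there exists $h\in\langle\mathcal{F}\rangle^{+}$ with $d_{H}(h(B(x_{0},r)^{\prime}),B(y_{0},r)^{\prime})<r/\Theta$, and the strategy is to push the nearly full measure portion of $A$ inside $B(x_{0},r)$ through $h$ into a neighborhood of $B(y_{0},r)$, contradicting the second density estimate.

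The principal technical obstacle is to translate the single Hausdorff estimate into measure-theoretic control of $h$ on $B(x_{0},r)$. Two ingredients are required. The first is a two-sided volume comparison for $h(B(x_{0},r))$: the upper bound $\mathrm{Leb}(h(B(x_{0},r)))\leq\mathrm{Leb}(B(y_{0},(1+1/\Theta)r))$ is immediate from the inclusion $h(B(x_{0},r)^{\prime})\subseteq B(y_{0},(1+1/\Theta)r)^{\prime}$, while the matching lower bound relies on the fact that a diffeomorphic (hence connected and topologically trivial) image of a ball which is Hausdorff-close to $B(y_{0},r)$ at scale $r/\Theta$ cannot miss more than an $O(1/\Theta)$-fraction of its volume. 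The second, more delicate ingredient is bounded distortion of $h$ on $B(x_{0},r)$: the image being shape-comparable to a round ball of the \emph{same} radius as the domain constrains the Jacobian $|Jh|$ to be approximately constant on $B(x_{0},r)$, with multiplicative constant depending only on $\Theta$ and $\dim M$. As the author indicates, this is intrinsic to hyper-minimality; I would extract it by invoking Definition~\ref{def1} at arbitrary sub-scales $r^{\prime}<r$ and centers $x^{\prime}\in B(x_{0},r)$, and combining the resulting local volume-growth comparisons into a uniform pointwise estimate on $\max|Jh|/\min|Jh|$.

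Granted these two ingredients, the conclusion is immediate. Invariance gives $h(A\cap B(x_{0},r))\subseteq A$, and bounded distortion combined with the near-full density of $A$ in $B(x_{0},r)$ yields
\[
\mathrm{Leb}\bigl(h(A\cap B(x_{0},r))\bigr)\geq (1-K\varepsilon)\,\mathrm{Leb}\bigl(h(B(x_{0},r))\bigr)\geq c(\Theta)\,\mathrm{Leb}(B(y_{0},(1+1/\Theta)r))
\]
for constants $K,c(\Theta)>0$ depending only on $\Theta$ and $\dim M$. Since $h(A\cap B(x_{0},r))\subseteq A\cap B(y_{0},(1+1/\Theta)r)$, this contradicts the second density estimate once $\varepsilon$ is taken small enough. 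The numerical restriction $\Theta>5$ enters precisely at this quantitative stage: it is needed to guarantee that the boundary-loss factor $(1+1/\Theta)^{\dim M}$ and the cumulative distortion constant combine to leave a strictly positive density gap, forcing a genuine contradiction.
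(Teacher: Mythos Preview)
Your overall strategy---density points plus mass transport through some $h\in\langle\mathcal{F}\rangle^{+}$---is the same as the paper's, but both of your two ``technical ingredients'' contain genuine gaps, and the paper's argument (via the auxiliary notion of a $(t,\ell)$-overlap number) is set up precisely to avoid them.

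The proposed extraction of bounded distortion cannot work as written. In Definition~\ref{def1} the map $\hat h=\hat h(r,x,y)$ depends on the triple $(r,x,y)$; invoking the definition at a sub-scale $r'<r$ and a sub-center $x'\in B(x_{0},r)$ produces a \emph{different} element $h'=h(r',x',y')$ of the semigroup, not the map $h=h(r,x_{0},y_{0})$ you fixed at the outset. Those sub-scale applications therefore say nothing about $|Jh|$ on $B(x_{0},r)$. A single Hausdorff estimate $d_{H}\bigl(h(\overline{B(x_{0},r)}),\overline{B(y_{0},r)}\bigr)<r/\Theta$ is entirely compatible with $h$ having arbitrarily large distortion on $B(x_{0},r)$: it may crush one sub-ball and stretch another while keeping the total image Hausdorff-close to a round ball. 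Your lower volume bound for $h(B(x_{0},r))$ has the same defect: a diffeomorphic image of a ball can be a thin simply-connected ``finger'' that snakes $r/\Theta$-densely through $B(y_{0},r)$ while having arbitrarily small measure; simple connectivity does not rule this out.

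The paper avoids both issues by never trying to control $|Jh|$ or $m(h(B))$. It introduces the $(t,\ell)$-overlap condition~(\ref{eq1}) and works only with the containment $h(B(p,r))\subseteq B(x,r+r/\Theta)$, the invariance $h(\mathcal{B}\cap B(p,r))\subseteq\mathcal{B}\cap h(B(p,r))$, and the density estimate $m(B(p,r)\setminus\mathcal{B})\le t\,m(B(p,r))$, combining these set-theoretic facts with~(\ref{eq1}) to get a uniform lower bound on $m(\mathcal{B}\cap B(x,r))/m(B(x,r))$ directly, and hence that no $x\in M$ is a density point of $\mathcal{B}^{c}$. The structural point is that the paper's route uses \emph{only} the inclusion coming from one side of the Hausdorff estimate, whereas your route requires quantitative Jacobian control on the specific $h$ that $\Theta$-hyper-minimality simply does not supply.
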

The next result states a generalized form of Theorem \ref{mainA}.
\begin{maincor}\label{mainB}
Every $\Theta$-hyper-minimal IFS generated by finitely many homeomorphisms, which the
Lebesgue measure is quasi-invariant\footnote{A measure
$\mu$ is said to be quasi-invariant if $(f_i)_\ast\mu$ is absolutely
continuous with respect to $\mu$ for every $i=1,\dots k$.} for them,
is ergodic with respect to Lebesgue measure.
\end{maincor}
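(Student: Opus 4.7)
My plan is to revisit the proof of Theorem~\ref{mainA} and observe that $C^1$-regularity enters there only to guarantee that Lebesgue measure is quasi-invariant under each generator---hence, by composition, under every element of the semigroup $\langle\mathcal{F}\rangle^+$. The geometric engine of the argument is supplied entirely by Definition~\ref{def1}, which is purely metric and refers to no derivatives. Since Corollary~\ref{mainB} hypothesizes quasi-invariance directly, this hypothesis takes over the sole role of the smoothness assumption in Theorem~\ref{mainA}.

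Concretely, I would fix a measurable invariant set $A\subseteq M$ with $\mu(A)>0$, pick a Lebesgue density point $x_0$ of $A$, and aim to show that every $y\in M$ is likewise a density point of $A$. For $\varepsilon>0$ choose $r$ small enough that $\mu(A\cap B(x_0,r)')\geq (1-\varepsilon)\,\mu(B(x_0,r)')$, and invoke $\Theta$-hyper-minimality to produce $h_r\in\langle\mathcal{F}\rangle^+$ satisfying
\begin{equation*}
d_H\bigl(h_r(B(x_0,r)'),\,B(y,r)'\bigr)<r/\Theta.
\end{equation*}
Invariance of $A$ gives $h_r(A\cap B(x_0,r)')\subseteq A$, and because each $f_i$ sends $\mu$-null sets to $\mu$-null sets, so does the composition $h_r$. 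This is precisely the place where the quasi-invariance hypothesis of the corollary substitutes for the Jacobian estimate used in the original argument.

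From here the density-transfer step of Theorem~\ref{mainA} carries over without modification: the Hausdorff sandwich forces $h_r(B(x_0,r)')$ to cover $B(y,r)'$ outside a layer of width $r/\Theta$, whose relative Lebesgue volume is bounded by a constant depending only on $\Theta$ and the local geometry of $M$. Since $\Theta>5$ is fixed, choosing $\varepsilon$ small and letting $r\to 0$ drives $\mu(A^c\cap B(y,r)')/\mu(B(y,r)')$ to zero, so $y$ is a density point of $A$, and therefore $\mu(A)=1$.

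The main obstacle I anticipate is bookkeeping rather than any new idea: one must verify that the density-transfer step in the proof of Theorem~\ref{mainA} is indeed phrased in terms of ``Hausdorff sandwich plus preservation of null sets'' and not in terms of a multiplicative Jacobian bound. In view of the remark following Definition~\ref{def1}---that bounded distortion is already built into the $\Theta$-hyper-minimality condition and is geometric rather than differential in nature---this verification should be essentially automatic, and the corollary follows.
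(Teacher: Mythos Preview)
Your proposal is correct and matches the paper's own treatment exactly: the paper disposes of Corollary~\ref{mainB} in a single remark observing that $C^1$-regularity in the proof of Theorem~\ref{mainA} was used \emph{only} to ensure quasi-invariance of Lebesgue measure, so the same argument carries over verbatim under the hypothesis of the corollary. Your worry about a hidden Jacobian estimate is unfounded---the density-transfer step in Theorem~\ref{mainA} is indeed purely metric (via the $(t,\ell)$-overlap number and the Hausdorff inclusion), with quasi-invariance invoked only at the very end.
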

As another consequence of Theorem \ref{mainA}, we obtain ergodicity for some generic ordinary systems.
Let
$$
            \mathcal{CCT}_m(\mathbb{T}^2)=\{hR_\alpha h^{-1}:\ h\in \text{Home}_m(\mathbb{T}^2),\ \alpha\in \mathbb{T}^2\},
$$
where $\text{Home}_m(\mathbb{T}^2)$ consists of all homeomorphisms that the
Lebesgue measure $m$  is quasi-invariant with respect to them and
$R_\alpha$ for $\alpha=(\alpha_1,\alpha_2)$ is the rigid translation
$$
         R_\alpha=R_{(\alpha_1,\alpha_2)}: \mathbb{T}^2\to \mathbb{T}^2;\ \ \ (x,y)\mapsto (x+\alpha_1,y+\alpha_2).
$$

We denote  the closure of the set $\mathcal{CCT}_m(\mathbb{T}^2)$ in the $C^0$-topology by
$\overline{\mathcal{CCT}_m}^{_{0}}(\mathbb{T}^2)$.
\begin{maincor}\label{mainC}
Ergodicity with respect to Lebesgue measure in the space $\overline{\mathcal{CCT}_m}^{_{0}}(\mathbb{T}^2)$
is a generic property.
\end{maincor}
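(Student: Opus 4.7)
The plan is to derive Corollary~\ref{mainC} from Corollary~\ref{mainB} via a Baire-category argument, showing that the subset of $\Theta$-hyper-minimal homeomorphisms inside $\overline{\mathcal{CCT}_m}^{_{0}}(\mathbb{T}^2)$ forms a dense $G_\delta$ for the $C^0$-topology. Every $f$ in $\mathcal{CCT}_m(\mathbb{T}^2)$ has Lebesgue measure as a quasi-invariant measure by construction, and this quasi-invariance passes to the $C^0$-closure; interpreting $f$ as the sole generator of an IFS, Corollary~\ref{mainB} then turns $\Theta$-hyper-minimality into Lebesgue-ergodicity of $f$, since for a quasi-invariant homeomorphism the notions of $\IFS(\{f\})$-invariance and classical $f$-invariance coincide.

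First I would verify that $\Theta$-hyper-minimality is $G_\delta$. Fix a countable dense set $\{(x_n,y_n,r_n)\}_{n\ge 1}$ in $\mathbb{T}^2\times\mathbb{T}^2\times(0,r_0)$ and, for each pair $(n,m)$, introduce
\[
V_{n,m} = \bigl\{ f : d_H\bigl(\hat{f}^{m}(B(x_n,r_n)^\prime),B(y_n,r_n)^\prime\bigr) < r_n/\Theta'\bigr\},
\]
where $\Theta'>\Theta$ is a fixed slack constant. Each $V_{n,m}$ is $C^0$-open by continuity of finite compositions together with uniform continuity of the Hausdorff distance on the compact hyperspace $\mathcal{K}(\mathbb{T}^2)$, so $\bigcap_n\bigcup_m V_{n,m}$ is $G_\delta$. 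A routine triangle-inequality argument, combined with the uniform continuity of any specific $f$ and of $(x,r)\mapsto B(x,r)^\prime$, upgrades this countable condition to the full $\Theta$-hyper-minimality of Definition~\ref{def1}.

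The density step is where the main obstacle lies. Since $\mathcal{CCT}_m(\mathbb{T}^2)$ is dense in its closure, it suffices to $C^0$-approximate each $f=hR_\alpha h^{-1}$ by a $\Theta$-hyper-minimal element. The key observation is that every pure translation $R_{\alpha'}$ with rationally independent coordinates is $\Theta$-hyper-minimal for every $\Theta>5$: since $R_{\alpha'}$ is an isometry of $\mathbb{T}^2$, we have $\hat{R}_{\alpha'}^{\,m}(B(x,r)^\prime)=B(x+m\alpha',r)^\prime$, and density of the orbit $\{x+m\alpha'\}$ in $\mathbb{T}^2$ lets us choose $m$ with $d_H(\hat{R}_{\alpha'}^{\,m}(B(x,r)^\prime),B(y,r)^\prime)<r/\Theta$. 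I would then approximate $h$ by a $C^1$-diffeomorphism $h'$ in the $C^0$-topology (using classical smooth approximation of surface homeomorphisms) and perturb $\alpha$ to a rationally independent $\alpha'$, so that $h'R_{\alpha'}(h')^{-1}\in\mathcal{CCT}_m(\mathbb{T}^2)$ is $C^0$-close to $f$.

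The hardest point is verifying that this $C^1$-smooth conjugate of a translation is itself $\Theta$-hyper-minimal. Conjugation distorts balls by a factor governed by $\|Dh'\|$ and $\|D(h')^{-1}\|$, but the slack $\Theta>5$ in Definition~\ref{def1} is tailored precisely to absorb such bounded distortion, much in the spirit of the paragraph following that definition. By taking $h'$ with distortion small enough (depending on $\Theta$), one shows that iterates of $h'R_{\alpha'}(h')^{-1}$ still send each $B(x,r)^\prime$ to within Hausdorff distance $r/\Theta$ of any target $B(y,r)^\prime$, invoking orbit-density of $R_{\alpha'}$ and uniform continuity of $h'$. Combining this density statement with the $G_\delta$ property yields a dense $G_\delta$ of $\Theta$-hyper-minimal, and hence Lebesgue-ergodic, elements of $\overline{\mathcal{CCT}_m}^{_{0}}(\mathbb{T}^2)$, establishing the claim.
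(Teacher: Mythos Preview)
Your density step has a genuine gap. You write ``By taking $h'$ with distortion small enough (depending on $\Theta$)\ldots'', but $h'$ is not free: it must be $C^0$-close to the given conjugating homeomorphism $h$, and a generic $h\in\text{Home}_m(\mathbb{T}^2)$ cannot be $C^0$-approximated by a $C^1$-diffeomorphism with small (or even uniformly bounded) distortion. Once $\|Dh'\|\cdot\|D(h')^{-1}\|$ is large, the map $h'R_{\alpha'}^{m}(h')^{-1}$ sends a small ball $B(x,r)$ to a set whose first-order shape is the ellipse $Dh'(z)\,D(h')^{-1}(x)\bigl(B(0,r)\bigr)$ with $z=R_{\alpha'}^{m}(h')^{-1}(x)$; its Hausdorff distance to any round ball $B(y,r)$ is of order $r$ times the eccentricity, not $r/\Theta$. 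So global $\Theta$-hyper-minimality of the conjugate simply fails for most $h'$, and Corollary~\ref{mainB} does not apply.

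The paper avoids this by working \emph{locally}. Instead of Corollary~\ref{mainB} it invokes Corollary~\ref{local} (minimal $+$ locally $\Theta$-hyper-minimal $\Rightarrow$ ergodic), so it only needs control in a neighbourhood of one point $x_0$. The dense class it uses is $\Gamma_{x_0}$, the homeomorphisms that are \emph{exactly affine} near $x_0$; for such $h$ the conjugate $hR_\gamma h^{-1}$ acts by a genuine translation on a fixed small ball around $x_0$, so balls go to balls and local $\Theta$-hyper-minimality holds on the nose (this is Corollary~\ref{it:hddd}). The residual set is then built from open sets $\mathcal{U}_n(h)$ defined by the local Hausdorff condition for finitely many radii. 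If you want to salvage your argument, replace the global notion by the local one and replace ``$C^1$ with small distortion'' by ``affine in a neighbourhood of a fixed point'', which \emph{is} $C^0$-dense in $\text{Home}_m(\mathbb{T}^2)$.
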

We remark that the metric entropy of every elements of $\overline{\mathcal{CCT}_m}^{_{0}}(\mathbb{T}^2)$ is zero.
\section{Proof of main result: theorem \ref{mainA}}
In this section,  we prove Theorem \ref{mainA} and also, we state and prove another form of it.
To this end, we begin with the following definition.
\begin{defi}
Consider two different positive numbers $0\leq t\leq 1/2,\ 0< \ell<1$ and also, consider $\Theta>1$.
We say that $\Theta$ is a $(t,\ell)$-overlap number on a smooth compact differential manifold $M$ if for every
$r>0$ and $x\in M$ the following hold:
\begin{equation}\label{eq1}
m(B(x,r)\bigcap B(y,r-r/\Theta))-tm(B(x,r))>\ell m(B(x,r-r/\Theta));\ \ \forall y\in B(x,r/\Theta),
\end{equation}
where $m$ is normalized Lebesgue measure
\end{defi}
Observe that if $1/\Theta$ are necessary  close to zero,
one can ensure that always exist $t, \ell$ in which
$t, 1-\ell$ are sufficiently  closed to zero and
$\Theta$ is a $(t,\ell)$-overlap number. In fact, in Definition \ref{def1},
we put $\Theta>5$  for existence suitable
numbers $t,\ell$ so that $\Theta$ is a $(t,\ell)$-overlap number.
Thus, hereafter, we take $\Theta>5$.

\subsection{Global dynamics}
Before we start to prove Theorem \ref{mainA}, notice that if the
$\IFS(M;\mathcal{F})$ is $\Theta$-hyper-minimal, then the system $\IFS(M;\mathcal{F})$ is minimal.
Indeed, let $x,y$ be two arbitrary points in $M$ and $B(y,a)$ be a neighborhood of $y$.
By the assumption, one can find $r<a/2$ and $\hat{h}\in \IFS(\mathcal{K}(M);\hat{\mathcal{F}})$ so that
$d_H(\hat{h}(B(x,r)^\prime),B(y,r)^\prime)<r/\Theta<r/4$, for some $\Theta>5$ .
Thus $\hat{h}(B(x,r))\subset B(y,a)$, which implies that minimality of $\IFS(M;\mathcal{F})$.
\begin{proof}[Proof of Theorem~\ref{mainA}]
Suppose that $M$ is a smooth compact differential
manifold and $m$ is normalized Lebesgue measure.
Also, assume that $\IFS(M;\mathcal{F})$ is $\Theta$-hyper-minimal.
Since $\Theta>5$, there are $t,\ell$ so that $\Theta$ is a $(t,\ell)$-overlap number.
Suppose that $0<m(\mathcal{B})<1$  and
$f_i(\mathcal{B})\subset\mathcal{B}$ for all $i=1,\dots,k$. It is implies that $h(\mathcal{B})\subset \mathcal{B},$ for every
$h\in\IFS(M;\mathcal{F})$. So, one can have
$h(\mathcal{B}\bigcap B(p,r))\subseteq \mathcal{B}\bigcap h(B(p,r))$.

One can choose a point $p\in DP(\mathcal{B})$ when  $0<m(\mathcal{B})=m(\mathrm{DP}(\mathcal{B}))$.
Hence there is $\kappa_0>0$ so that for every $0<\kappa<\kappa_0$
$$
 m(\mathcal{B}:B(p,\kappa))=\frac{m(\mathcal{B}\cap
 B(p,\kappa))}{m(B(p,\kappa))}\geq 1-t.
$$
Since $\IFS(M;\mathcal{F})$ is $\Theta$-hyper-minimal, by definition, for some $0<r_0<k_0$ and every $x\in M$ the following hold:
for every $0<r<r_0$ there exists $\hat{h}=\hat{h}(r,p,x)\in \IFS(\mathcal{K}(M);\hat{\mathcal{F}})$ so that
$$
          d_H(\hat{h}(B(p,r)^\prime),B(x,r)^\prime)<r/\Theta,
$$
and it implies that $\hat{h}(B(p,r))\subseteq B(x,r+r/\Theta)$.
In particular, it holds for  $x\in B(p,r/\Theta)$.
On the other hand, $\Theta$ is a $(t,\ell)$-overlap number. Thus,
$$
 m(B(p,r)\bigcap B(x,r-r/\Theta))-tm(B(p,r))>\ell m(B(p,r-r/\Theta)),
$$
for  $x\in B(p,r/\Theta)$. It follows that
$$
\ell m(B(p,r-r/\Theta))<m(\hat{h}(B(p,r))\bigcap\mathcal{B})<m( B(x,r+r/\Theta)).
$$
Take $J=B(x,r)$. Hence, one can have
\begin{align*}
\frac{m(\mathcal{B}\bigcap J)}{m(J)}&\geq
\frac{\frac{\Theta}{\Theta+1}m(\mathcal{B}\bigcap \hat{h}(B(p,r)))}{m(J)}\\
&\geq \frac{\Theta}{\Theta+1}\cdot\frac{m(\mathcal{B}\bigcap \hat{h}(B(p,r)))}{m(J)}\\
&> \frac{\Theta}{\Theta+1}\cdot\frac{\ell m(B(p,r-r/\Theta))}{m(J)}\\
&> \frac{\Theta}{\Theta+1}\cdot\ell\cdot \frac{ m(B(p,r-r/\Theta))}{m(J)}\\
&>\frac{\Theta}{\Theta+1}\cdot\ell\cdot c
\end{align*}
where the constant $c$ is equal to $\frac{ m(B(p,r-r/\Theta))}{m(J)}$.
This  means that $\frac{m(\mathcal{B}\bigcap J)}{m(J)}$ is
bounded from below for every neighborhood $J$ of $x$.
It is equivalent to: for every $x\in M$, $x\not\in DP(\mathcal{B}^c)$ which is a contradiction, when
$\IFS(M;\mathcal{F})$ is minimal and the Lebesgue measure is a quasi-invariant for any of generators.
\end{proof}
\begin{rem}
In the proof of Theorem \ref{mainA}, we used the $C^1$-regularity just to show that the Lebesgue measure is quasi-invarint, so the
 proof of Corollary~\ref{mainB} is similar to Theorem \ref{mainA}.
\end{rem}
\subsection{Local dynamics}
Its visible from the proof of Theorem \ref{mainA}, the definition \ref{def1} has more than which we need to proof ergodicity.
So, one can define a locally version of its.
\begin{defi}\label{de1}
We say that  $\IFS(M;\mathcal{F})$ is locally $\Theta$-hyper-minimal if there exists an open set $U\subseteq M$, measurable subset $U^\prime$ of $U$
with $m(U^\prime)>0$ and $r_0>0$ so that for all $x\in U^\prime$ and $y\in U$ in which the following holds:
for every every $0<r<r_0$, there exists
$\hat{h}=\hat{h}(r)\in \IFS(\mathcal{K}(M);\hat{\mathcal{F}})$ so that
$$
          d_H(\hat{h}(B(x,r)^\prime),B(y,r)^\prime)<r/\Theta.
$$
\end{defi}
\begin{cor}\label{local}
Suppose that $M$ is a smooth compact differential
manifold and $\IFS(M;\mathcal{F})$ is minimal and locally $\Theta$-hyper-minimal, for
$\mathcal{F}=\{f_1,\dots,f_k\}\subset \mathrm{Diff}^{1}(M)$.
Then the system $\IFS(M;\mathcal{F})$ is ergodic with respect to Lebesgue measure.
\end{cor}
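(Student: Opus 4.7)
The plan is to adapt the proof of Theorem \ref{mainA} by prepending a \emph{localization} step (producing a density point of $\mathcal{B}$ inside the prescribed set $U'$) and appending a \emph{globalization} step (spreading the local conclusion to all of $M$ via minimality of the IFS). Arguing by contradiction, I assume that $\mathcal{B} \subseteq M$ is $\IFS(M;\mathcal{F})$-invariant with $0 < m(\mathcal{B}) < 1$, and fix $t, \ell$ so that $\Theta$ is a $(t,\ell)$-overlap number, exactly as in the global case.

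The argument then runs in three steps. Step one secures a density point $p \in DP(\mathcal{B}) \cap U'$: if $m(\mathcal{B} \cap U') > 0$ this is the Lebesgue density theorem, otherwise I would start from any $q \in DP(\mathcal{B})$ and use minimality of $\IFS(M;\mathcal{F})$ together with quasi-invariance of Lebesgue measure and the $C^1$-smoothness of the generators to find $h \in \langle\mathcal{F}\rangle^+$ with $h(q) \in DP(\mathcal{B}) \cap U'$, exploiting that $h(\mathcal{B}) \subseteq \mathcal{B}$ and bounded distortion from $C^1$-smoothness carry density points forward along orbits. Step two fixes such a $p \in DP(\mathcal{B}) \cap U'$ and, for every $y \in U$ and every $0 < r < r_0$, invokes local $\Theta$-hyper-minimality to obtain $\hat{h}$ with $d_H(\hat{h}(B(p,r)'),B(y,r)') < r/\Theta$; the calculation in the proof of Theorem \ref{mainA} then goes through verbatim and yields a uniform positive lower bound for $m(\mathcal{B}\cap B(y,r))/m(B(y,r))$ as $r\to 0$, whence $y \notin DP(\mathcal{B}^c)$ and therefore $m(\mathcal{B}^c \cap U) = 0$. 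Step three globalizes: for any $z \in M$ minimality of the IFS furnishes $h \in \langle\mathcal{F}\rangle^+$ with $h(z) \in U$, so $z \in h^{-1}(U)$; backward invariance $h^{-1}(\mathcal{B}^c) \subseteq \mathcal{B}^c$ combined with quasi-invariance of Lebesgue measure gives $m(\mathcal{B}^c \cap h^{-1}(U)) \leq m(h^{-1}(\mathcal{B}^c \cap U)) = 0$, and summing over the countable semigroup $\langle\mathcal{F}\rangle^+$, which covers $M$ by minimality, yields $m(\mathcal{B}^c) = 0$, contradicting $m(\mathcal{B}) < 1$.

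The main obstacle I expect is step one. Since $U'$ is only required to be measurable with positive measure (not open), the topological denseness of IFS orbits provided by minimality does not directly place the forward orbit of a density point of $\mathcal{B}$ inside $U'$ rather than inside the surrounding open set $U$. Overcoming this will require a careful interplay between minimality, the $C^1$-smoothness of the generators (which propagates density points along orbits), and the quasi-invariance of Lebesgue measure (which converts topological denseness of orbits into a measure-theoretic landing condition inside $U'$). Steps two and three are then largely mechanical: step two is an almost verbatim copy of the estimates in the proof of Theorem \ref{mainA}, with the target $y$ restricted to $U$ instead of all of $M$, and step three is a routine countable-covering argument using minimality and quasi-invariance.
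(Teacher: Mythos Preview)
Your three-step plan matches the paper's proof in overall structure: the paper too selects a density point $p\in U'$, reruns the Theorem~\ref{mainA} estimate to conclude that no $y\in U$ can be a density point of $\mathcal{B}^c$, and then appeals to minimality together with quasi-invariance for the contradiction. Your Step two is verbatim what the paper does, and your Step three is the globalization that the paper compresses into its final sentence.

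The real difference is Step one, where you are working much harder than the paper. The paper's localization is a one-line observation: since $m(U')>0$ and almost every point of $M$ lies in $DP(\mathcal{B})\cup DP(\mathcal{B}^c)$, the set $U'$ must intersect at least one of these two sets, and the paper then writes ``without loss of generality'' $p\in U'\cap DP(\mathcal{B})$. No orbit-pushing into $U'$ is required, so the difficulty you single out as the main obstacle---that minimality only lands orbits in open sets, not in the merely measurable $U'$---simply never arises. Your case~(b) mechanism is unnecessary and, as you yourself anticipate, not obviously repairable: there is no general way to convert topological density of orbits into hitting a prescribed positive-measure set.

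One genuine slip in your Step three: from $h^{-1}(\mathcal{B}^c)\subseteq\mathcal{B}^c$ you only get $h^{-1}(\mathcal{B}^c\cap U)=h^{-1}(\mathcal{B}^c)\cap h^{-1}(U)\subseteq\mathcal{B}^c\cap h^{-1}(U)$, so the inequality you wrote is reversed and does not force $m(\mathcal{B}^c\cap h^{-1}(U))=0$. The paper does not spell this passage out either---it merely asserts that the contradiction follows from minimality and quasi-invariance---so this is a gap shared with the source rather than a divergence from it.
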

\begin{proof}
Suppose that $0<m(\mathcal{B})<1$  and $f_i(\mathcal{B})\subset\mathcal{B}$ for all $i=1,\dots,k$.
Also, suppose that $U\subseteq M$ is an open set and  $U^\prime$ is a measurable subset $U$ with $m(U^\prime)>0$
so that they satisfy in Definition \ref{de1}. On the other hand, $m(DP(B^c))+m(DP(B))=1$ and $0<m(\mathcal{B})m(\mathcal{B}^c)<1$.
Thus, $U^\prime \cap DP(B^c)\neq \emptyset$ or $U^\prime \cap DP(B)\neq \emptyset$.
Without loss of generality, let $p\in U^\prime \cap DP(B)\neq \emptyset$.
Hence there is $\kappa_0>0$ so that for every $0<\kappa<\kappa_0$
$$
 m(\mathcal{B}:B(p,\kappa))=\frac{m(\mathcal{B}\cap
 B(p,\kappa))}{m(B(p,\kappa))}\geq 1-t.
$$
Let $y$ arbitrary point $ U$, by assumption, for every $r<r_0$ with $r_0<k_0$ there exists
$\hat{h}\in \IFS(\mathcal{K}(M);\hat{\mathcal{F}})$ so that
$$
              d_H(\hat{h}(B(p,r)^\prime),B(y,r)^\prime)<r/\Theta.
$$
By similar argument used in the proof of Theorem \ref{mainA}, for  $r<r_0$ one can prove that
 $\frac{m(\mathcal{B}\bigcap B(y,r))}{m(B(y,r))}$ is
bounded from below for every $y\in U$.
It is equivalent to: for every $y\in U$, $y\not\in DP(\mathcal{B}^c)$. Then $m(U^\prime \cap DP(B))=m(U)$
which is a contradiction, when  $\IFS(M;\mathcal{F})$ is minimal and the
Lebesgue measure is a quasi-invariant for any of generators.
\end{proof}
\section{Examples}
We are going to consider several examples, which all of them are ergodic with respect to Lebesgue measure.
\subsection{Example on the circle}
In this example, we construct an iterated function systems on
hyper-space over the circle. To this end, let 
$I_1,I_2$ be two open connected subsets of $S^1$ with
the following property
\begin{enumerate}[label=(\roman*),ref=\roman*]
\item\label{it:0} $I_1\bigcup I_2=S^1$,
\item\label{it:1} $m(S^1\setminus I_1)<\frac{1}{20}$,
\item\label{it:2} for all $x\in S^1\setminus I_1,\ B(x,1/4)\subset I_2$,
\end{enumerate}
Let $f_1, f_2$ be two $C^{1+\alpha}$ function on the circle so that
 $$
 f_1|_{I_1}=R_\beta,\ f_2|_{I_2}=R_\gamma
 $$
where  $\beta\thickapprox\gamma\in \mathbb{Q}^c$,  and $\beta\gtrapprox m(S^1\setminus I_1)$.
Take $\mathcal{F}=\{f_1,f_2\}$.
We claim that  $\IFS(\mathcal{K}(S^1);\hat{\mathcal{F}})$ is $\Theta$-hyper-minimal.
Since we work on one dimension, it means that for every $x,y\in S^1$ and some  $r_0>0$ the following holds: for every $r<r_0$ there exists
$\hat{h}\in \langle\hat{\mathcal{F}}\rangle^+$ in which
$$
d_H(\hat{h}(B(x,r)),B(y,r))<r/5.
$$
Since $x\in I_1$ (resp. $x\in I_2$), one can apply  $f_1$ (resp. $f_2$) and so, from this point of view,
$f_\omega(x)=R_\omega(x)$
for some $\omega\in \Sigma^+_k$.

On the other hand, from \cite{bfs}, if $\omega$ is a dense sequence under the shift map then the fiberwise orbit dive by it is
dense on the circle for all $x\in S^1$. However,
from \cite{k}, for every $x\in S^1$, there exists $\omega\in \Sigma^+_k$
 such that $\mathcal{O}^+_\omega(x)$ is nowhere dense on the circle.

Take
$$
    k=max\{n\in \mathbb{N};\ n\beta\leq1-\gamma\}\in\mathbb{Q}.
$$
Now, for $x$ in $X$, without loss of generality, let $x\in I_1$. We apply $f_1$ on $x$. Again, if $f_1(x)\in I_1$ and
$S^1\setminus I_1 \nsubseteqq [x,f_1(x)]$ apply $f_1$ on $f_1(x)$.
Otherwise, if $f_1(x)\not\in I_1$ or
$S^1\setminus I_1 \subseteqq [x,f_1(x)]$ apply $f_2$ on $f_1(x)$. By an inductive process,
one can construct $\omega\in\Sigma^+_k$
so that
$$
\lim_{n\to \infty}\frac{1}{n}\text{Card}\{i\in \mathbb{N}:\ \omega_i=1\ \text{and}\ i\leq n\}=\frac{k-1}{k},
$$
which implies that $\overline{\mathcal{O}_\omega^+(x)}=X$.
On the other hand, by construction $\omega$, for every $i\in\mathbb{N}$, $f_\omega^i(x)=R_\omega^i(x)$.
Therefore, there is $i\in \mathbb{N}$ so that $f_\omega^i(x)$ is sufficiently close to $y$.
Also, when $f_\omega^i(x)=R_\omega^i(x)$
by choosing suitable $i$, one can have $d_H(B(y,r), f_\omega^i(B(x,r)))<r/5$ for every $r$
less that the Lebesgue number of the open covering $\{I_1,I_2\}$.

Hence $\IFS(S^1;f_1,f_2)$ is ergodic with respect to Lebesgue measure.
\subsection{Example on the torus}\label{sec}
Now, we prove that the Lebesgue measure is ergodic for some conjugancy class of translations
of $\mathbb{T}^2$. For $x\in M$, let a subset $\Gamma_x$ of  $\mathrm{Home}(M)$ be defined as follow:
the set of all $g\in \mathrm{Home}(M)$ so that there exist
an invertible  matrix $A_{2\times 2}$ and  a neighborhood $U_x$ of $x$ with
$$
g(z)=A(z-x)+g(x);\ \ \ \forall z\in U_x
$$
Now, let $\gamma\in \mathbb{Q}^c\times\mathbb{Q}^c\bigcap\mathbb{T}^2$ and $h\in\Gamma_{x_0}$,
for some $x_0\in \mathbb{T}^2$.
Since $R_\gamma$ is minimal, the conjugate of it $hR_\gamma h^{-1}$ is minimal, too.
Thus, it is sufficient to prove that  $\IFS(\mathcal{K}(\mathbb{T}^2);\widehat{hR_{\gamma}h^{-1}})$ is locally $\Theta$-hyper-minimal,
for a $\Theta$ greater than $5$.

Since $h\in \Gamma_{x_0}$, one can fined $0<\rho_h$ so that
$Dh(z)=Dh(y)=A$  for every $y,z\in B(x_0,\rho_h)$ and $Dh^{-1}(z)=Dh^{-1}(y)=A^{-1}$ for every $y,z\in h(B(x_0,\rho_h))$.
Take $r_h=\rho_h/2$ and $y\in B(x_0,r_h)$.
For $0<r<r_h$ and using of construction of $h,h^{-1}$, it is not hard to find that there is $n_j$ so that for $z,y \in B(x_0,r_h)$
$$
     d_H(hR^{n_j}_\gamma h^{-1}(B(z,r)^\prime),B(y,r)^\prime)<r/\Theta.
$$
Summing up, the following holds:
\begin{cor}\label{it:hddd}
Let $\gamma\in \mathbb{Q}^c\times\mathbb{Q}^c\bigcap\mathbb{T}^2$ and $h\in\Gamma_{x_0}$,
for some $x_0\in \mathbb{T}^2$.
Then $\IFS(\mathbb{T}^2;hR_\gamma h^{-1})$ is ergodic with respect to Lebesgue measure.
\end{cor}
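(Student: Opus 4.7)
The plan is to deduce Corollary~\ref{it:hddd} from Corollary~\ref{local}, by verifying two properties of $\IFS(\mathbb{T}^2;hR_\gamma h^{-1})$: minimality and local $\Theta$-hyper-minimality for some $\Theta>5$. Minimality is immediate from the hypotheses: since $\gamma=(\gamma_1,\gamma_2)$ has both coordinates irrational, the rigid translation $R_\gamma$ is minimal on $\mathbb{T}^2$, and minimality is a topological conjugacy invariant, so it transfers to $hR_\gamma h^{-1}$ through the homeomorphism $h$.

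For the local condition, I would exploit the local affine structure of $h$ supplied by the assumption $h\in\Gamma_{x_0}$. On a neighborhood $B(x_0,\rho_h)$ of $x_0$, $h$ coincides with the affine map $z\mapsto A(z-x_0)+h(x_0)$; correspondingly $h^{-1}$ is affine on $h(B(x_0,\rho_h))$. A direct computation then gives, whenever both $z$ and $R_\gamma^n(h^{-1}(z))$ sit inside these affine charts,
\[
  hR_\gamma^n h^{-1}(z)=z+An\gamma,
\]
so the conjugate acts as a pure translation by $An\gamma$. Setting $U=U'=B(x_0,r_h)$ with $r_h=\rho_h/2$ and choosing the threshold $r_0$ small compared to $r_h$, for any $z\in U'$ and $y\in U$ the density of the orbit $\{R_\gamma^n(h^{-1}(z))\}_{n\ge 1}$ in $\mathbb{T}^2$ furnishes iterates $n_j$ with $R_\gamma^{n_j}(h^{-1}(z))$ arbitrarily close to $h^{-1}(y)$. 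Applying $h$ then turns $B(z,r)$ into an exact translate landing within $r/\Theta$ of $B(y,r)$, giving the Hausdorff estimate in Definition~\ref{de1}.

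The main obstacle I anticipate is the bookkeeping required to guarantee that the whole ball $B(z,r)$, not just its center, is carried by $R_\gamma^{n_j}$ back into the affine chart $B(x_0,\rho_h)$; only then does the identity $hR_\gamma^n h^{-1}(w)=w+An\gamma$ hold uniformly across $B(z,r)$, with no distortion between image and target. This is arranged by insisting that $r_0$ is small compared to the buffer $\rho_h-r_h$, and by selecting $n_j$ so that the translated ball lands deep inside the chart. With this uniform affine description in place, local $\Theta$-hyper-minimality follows with $\Theta>5$ (tunable via the accuracy of the density approximation), and Corollary~\ref{local} immediately yields Lebesgue-ergodicity of $\IFS(\mathbb{T}^2;hR_\gamma h^{-1})$.
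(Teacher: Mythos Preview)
Your proposal is correct and follows essentially the same approach as the paper: you verify minimality (via conjugacy invariance of the minimal $R_\gamma$) and local $\Theta$-hyper-minimality on $U=U'=B(x_0,r_h)$ with $r_h=\rho_h/2$, then invoke Corollary~\ref{local}. Your explicit computation $hR_\gamma^n h^{-1}(z)=z+An\gamma$ on the affine charts is in fact more detailed than the paper's own argument, which simply asserts that ``using the construction of $h,h^{-1}$, it is not hard to find'' the required $n_j$; the paper likewise sets $r_h=\rho_h/2$ and works with $z,y\in B(x_0,r_h)$, so the two arguments coincide.
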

\subsection{Example on the sphere}
Here, a sphere $S^2$ is defined as the set of points that are all at the
same distance $r=\frac{1}{2\pi}$ from $0=(0,0,0)$, in three-dimensional space.

Take a particular point $P=(0,0,r)$ on a sphere as its north pole,
then the corresponding antipodal point $Q=(0,0,-r)$ is south pole.
Notice that the equator $S^1$ of sphere, is great circle which lies in $xy$-space with length one.
We equip $S^1$ to an orientation and we will denote by $\stackrel{\frown}{ab}$ the arc from $a$ to $b$
according to this orientation.
Also, the circle of sphere which through the point
$H$ and parallel to equator $S^1$, denoted by $\Upsilon_{S^1}(H)$.

Observe that one can define for $\gamma\in (0,1)$ a
preserving orientation map $\Theta_\gamma:S^1\to S^1$
as follows $\Theta_\gamma(a)=b$,  where the length of
the arc $\stackrel{\frown}{ab}$ is equal to $\gamma$.
It is well known that $\Theta_\gamma$ is minimal if $\gamma$ is irrational.

Also, let  $H=(x_1,y_1,z_1)$ be a point on $2$-sphere.
Take $\Xi(P,Q:H)$ the meridian longitude which contains points
$H,P,Q$ and also take $\Gamma(P,Q:H)$ the connected part
of $\Xi(P,Q:H)\setminus \{P,Q\}$ which contains point $H$.
Define a map $\pi_{\{P,Q\}}:S^2\setminus\{P,Q\}\to S^1$ as follows
$\pi_{\{P,Q\}}(H)=\Gamma(P,Q:H)\bigcap S^1$.
Observe that the map $\pi_{\{P,Q\}}$ is not invertible but correspond to every point $K$ in $S^1$,
one can correspond to  $K$, $\psi_{\{P,Q\}}^H(K)$ on $\Upsilon_{S^1}(H)$ so that
$\psi_{\{P,Q\}}^{H}(K)=\Upsilon_{S^1}(H)\bigcap \Gamma(P,Q:K)$ and
$$
      \psi_{\{P,Q\}}^{H}(\pi_{\{P,Q\}}(H))=H.
$$
The translations $T_{\gamma}:S^2\to S^2$ have the form
$$
T_\gamma(H)=
\left\{
  \begin{array}{ll}
    T_\gamma(H)=H, & \hbox{H=p\ \text{or}\ q;} \\
    T_{\gamma}(H)=\psi_{\{P,Q\}}^{H}(\Theta_\gamma(\pi_{\{P,Q\}}(H))), & \hbox{otherwise}.
  \end{array}
\right.
$$
Now,  take a particular point $e=(r,0,0)$ on a sphere as its north pole,
then the corresponding antipodal point $w=(-r,0,0)$ is south pole.
For this additive notation, take $\hat{S^1}$ the equator of
sphere which is great circle which lies in $yz$-space with length one.
Again, we will equip $\hat{S^1}$ to an orientation and we will define
$\hat{\Theta}_\gamma:\hat{S^1}\to \hat{S^1}$
as follows $\hat{\Theta}_\gamma(x)=y$,  where the length of the arc from $x$ to $y$
according to its orientation is $\gamma$.

The translations $R_{\gamma}:S^2\to S^2$ have the form
$$
R_\gamma(H)=\left\{
  \begin{array}{ll}
    R_\gamma(H)=H, & \hbox{H=e\ \text{or}\ w;} \\
    R_{\gamma}(H)=\psi_{\{e,w\}}^{H}(\hat{\Theta}_\gamma(\pi_{\{e,w\}}(H))),& \hbox{otherwise.}
  \end{array}
\right.
$$
Now, consider the IFS generated by
$\mathscr{F}=\{f_1, f_2\}$, where $f_1=T_{\gamma_1}$ and $f_2=R_{\gamma_2}$ for irrational numbers $\gamma_1,\gamma_2$.
It is not hard to show that the following hold
\begin{enumerate}
\item\label{it:1} Both of $\IFS(S^2;\mathscr{F})$ and $\IFS(S^2;\mathscr{F}^{-1})$ are minimal.
\item\label{it:2} Both of $\IFS(S^2;\mathscr{F})$ and $\IFS(S^2;\mathscr{F}^{-1})$ is equicontinuous.
\item\label{it:3} $\langle\mathscr{F}\bigcup\mathscr{F}^{-1}\rangle^+$ does not contain any minimal element.
\item\label{it:4} The Lebesgue measure is ergodic for both of $\IFS(S^2;\mathscr{F})$ and $\IFS(S^2;\mathscr{F}^{-1})$.
\end{enumerate}
\begin{rem}
Notice that both $\IFS(S^2;\mathscr{F})$ and $\IFS(S^2;\mathscr{F}^{-1})$
satisfies the deterministic chaos game by Theorem 3 in \cite{L15}.
\end{rem}
\section{Proof of Corollary~\ref{mainC}: Generic ergodicity of some systems}
Recall that a subset $\mathcal{R}$ is residual if it
contains a countable intersection of open dense sets.
The  space $C^0(\mathbb{T}^2)$ with the $C^0$-topology is a Polish space, so that
any closed subset of it (in particular, $\overline{\mathcal{CCT}_m}^{_{0}}(\mathbb{T}^2)$) is a Baire space. A property
is said to be generic in a Baire space if it holds in a residual subset of the space.

Let us that we follow the notation of the Subsection \ref{sec}.
Observe that the set $\Gamma_x$
is dense in the metric space $(\text{Home}(\mathbb{T}^2), d_0)$, for every $x\in \mathbb{T}^2$ and
$\{R_\beta:\ \beta\in \mathbb{Q}^c\times\mathbb{Q}^c\}$ is dense in the set of all translations
with the metric $d_0$, then by  Corollary \ref{it:hddd} the following holds:
the metric space $(\overline{\mathcal{CCT}_m}^{_{0}}(\mathbb{T}^2),d_0)$ contains of a dense subset
which every element of it is  ergodic with respect to the Lebesgue measure.

Moreover, for $h\in \Gamma_x$, we define $\mathcal{U}_n(h)$
consist of all $fR_\gamma f^{-1}$ for some $\gamma\in\mathbb{T}^2$ with the following properties
\begin{enumerate}
\item $f\in \text{Home}(\mathbb{T}^2)$ and $\|f-h\|_1<1/n$,
\item $d_H(fR^{j}_\gamma f^{-1}(B(z,r)^\prime),B(y,r)^\prime)<r/\Theta$; for $r_h<r<\frac{r_h}{n+1},\ z,y\in B(x,r_h)$  and some $j\in \mathbb{N}$,
\end{enumerate}
Since $\mathcal{U}_n(h)$ is a non-empty open set and  $\Gamma_x$
is dense in the space $(\text{Home}(\mathbb{T}^2), d_0)$, the set $\mathcal{U}_n=\cup_{h\in\Gamma_x} \mathcal{U}_n(h)$
is an open and dense subset of $(\overline{\mathcal{CCT}_m}^{_{0}}(\mathbb{T}^2),d_0)$.
Take $\mathcal{U}=\cap_n\mathcal{U}_n$.
It is clear that every $f\in \mathcal{U}$, is locally
$\Theta$-hyper-minimal, for a $\Theta$ greater than 5.

\textbf{Acknowledgments.}
We thank P. G. Barrientos, A. Fakhari and M. Nassiri for useful discussions
and suggestions.
\def\cprime{$'$}


\begin{thebibliography}{1}
\bibitem{aim}
G. Acosta, A. Illanes and H. Mendez-Lango.
\newblock {\em The transitivity of induced maps}. {\em Topology Appl.}
156 (2009), 1013--1033.

\bibitem{bfs}
P. G. Barrientos, A. Fakhari and A. Sarizadeh.
\newblock {\em Density of fiberwise orbits in minimal iterated function systems on the circle.}
{\em Discrete Contin. Dyn. Syst.} 34 (2014), no. 9, 3341--3352. MR3190981.


\bibitem{bfms}
P. G. Barrientos, A. Fakhari, D. Malicet and A. Sarizadeh
\newblock {\em Expanding actions: Minimality and Ergodicity}
{\em arXiv:1307.6054v2}.

\bibitem{dkn}
B. Deroin, V. Kleptsyn, and A. Navas.
\newblock{ On the question of ergodicity for minimal group actions on the circle}
{\em . Mosc. Math. J.} 9 (2009), no. 2, 263--303, MR2568439.

\bibitem{f}
V. V. Fedorenko.
\newblock {\em Asymptotic periodicity of the trajectories of an interval}. {\em Ukrainian Math. J.}, 61 (2009), 854--858.
\bibitem{frs}
V. V. Fedorenko, E. Yu. Romanenko and A. N. Sharkovsky.
\newblock {\em Trajectories of intervals in one-
dimensional dynamical systems}. {\em J. Difference Equ. Appl.}, 13 (2007), 821--828.

\bibitem{f61}
H. Furstenberg.
\newblock {\em Strict ergodicity and transformation of the torus.}
\newblock {\em Amer. J. Math.}, 83, 573-601, 1961.


\bibitem{k}
Y. Katznelson. \newblock {\em On $\alpha\beta$-Sets}. {\em Israel Journal of Mathematics}, lambda. 33, No. 1, 1979

\bibitem{ko}
D. Kwietniak and P. Oprocha.
\newblock {\em Topological entropy and chaos for maps induced on hyper-
spaces}. {\em Chaos Solitons Fractals}, 33 (2007), 76--86.


\bibitem{L15}
K.~{Le{\'s}niak},
\newblock \emph{Random iteration for infinite nonexpansive iterated function systems},
\newblock  {Choas} \textbf{25} (2015), 083117,5 pp.

\bibitem{m}
M. Matviichuk.
\newblock {\em On the dynamics of subcontinua of a tree}. {\em J. Difference Equ. Appl.}, 19 (2013),
223--233.
\bibitem{n}
S. B. Nadler.
\newblock {\em Continuum Theory: An Introduction}. {\em
Monographs and Textbooks in Pure
and Applied Mathematics.} 158, Marcel Dekker, New York, 1992.

\bibitem{Na04}
A. Navas.
\newblock Sur les groupes de diff\'eomorphismes du cercle engendr\'es par des
  \'el\'ements proches des rotations.
\newblock {\em Enseign. Math.}, (2), 50(1-2), 29-68, 2004.

\bibitem{OR01}
F. Oliveira and L.~F.~C. da~Rocha.
\newblock{\em Minimal non-ergodic $C^1$-diffeomorphisms of the circle.}
\newblock {\em Ergod. The. Dynam Sys.}, 21(6), 1843-1854, 2001.

\bibitem{S15}
A. Sarizadeh
\newblock{\em Non-removable term ergodic action semigroups/groups.}
{\em Proc. Amer. Math. Soc.} 143 (2015), no. 8, 3445--3453. MR3348787.
\end{thebibliography}
\end{document}